\definecolor{webgreen}{rgb}{0,.5,0}
\definecolor{webbrown}{rgb}{.6,0,0}
\title{Addition Chains Meet Postage Stamps:\\Reducing the Number of
  Multiplications}
\author{
  Jukka Kohonen \\
  Department of Mathematics and Statistics\\
  P.O. Box 68\\
  FI-00014 University of Helsinki\\
  \href{mailto:jukka.kohonen@helsinki.fi}{\tt jukka.kohonen@helsinki.fi}\\
  \and
  Jukka Corander \\
  Department of Mathematics and Statistics\\
  P.O. Box 68\\
  FI-00014 University of Helsinki\\
  \href{mailto:jukka.corander@helsinki.fi}{\tt jukka.corander@helsinki.fi}\\
}
\date{}
\newtheorem{theorem}{Theorem}
\newtheorem{lemma}{Lemma}
\newtheorem{corollary}{Corollary}
\theoremstyle{definition}
\newtheorem{definition}{Definition}
\newtheorem{example}{Example}
\theoremstyle{remark}
\newtheorem*{remark}{Remark}
\newtheorem*{notation}{Notation}
\newcommand\nbar{{\overline{n}}}
\newcommand{\seqnum}[1]{\href{http://oeis.org/#1}{\underline{#1}}}
\begin{document}

\maketitle

\begin{abstract}
  We introduce {\em stamp chains}.  A stamp chain is a finite set of
  integers that is both an addition chain and an additive 2-basis,
  i.e., a solution to the postage stamp problem.  We provide a simple
  method for converting known postage stamp solutions of length $k$
  into stamp chains of length $k+1$.  Using stamp chains, we construct
  an algorithm that computes $u(x^i)$ for $i=1,\ldots,n$ in less than
  $n-1$ multiplications, if $u$ is a function that can be computed at
  zero cost, and if there exists another zero-cost function $v$ such
  that $v(a,b) = u(ab)$.  This can substantially reduce the
  computational cost of repeated multiplication, as illustrated by
  application examples related to matrix multiplication and data
  clustering using subset convolution.  In addition, we report the
  extremal postage stamp solutions of length $k=24$.
\end{abstract}

%%%%%%%%%%%%%%%%%%%%%%%%%%%%%%%%%%%%%%%%%%%%%%%%%%%%%%%%%%%%%%%%%%%%%%

\section{Introduction}
\label{sec:introduction}
An addition chain is an increasing sequence of integers starting
from~1, where each subsequent element is a sum of two earlier elements
(not necessarily distinct).  Addition chains are well known for their
use in repeated multiplication to compute $x^n$. For example, the
chain $1,2,3,6,12,15$ shows how $x^{15}$ is computed with five
multiplications: $xx = x^2$, $x^2x = x^3$, $x^3x^3 = x^6$, $x^6x^6 =
x^{12}$, and $x^{12}x^3 = x^{15}$.

If all consecutive powers $x, x^2, \ldots, x^n$ are required, not just
the final value, then obviously $n-1$ multiplications are required.

Now suppose that the powers $x^i$ themselves are not of interest, but
instead the values $y_i = u(x^i)$, $i=1,\ldots,n$, are sought for a
given function $u$.  Let us also assume that computing $u$ is free of
cost (or negligible compared to the cost of multiplication).  Let us
further assume that given two values $a$ and $b$, there is a method
for computing $v(a,b) := u(ab)$ for free without actually performing
the multiplication $ab$.

If these assumptions hold, then it is not necessary to compute all of
the powers $x, x^2,\ldots,x^n$.  Instead, a carefully selected subset
of these powers is computed; then each $y_i$ is obtained either by
applying $u$ to one of the computed powers, or $v$ to a pair of them.
For instance, suppose that $x^5$ and $x^7$ have been computed but
$x^{12}$ has not.  Now there are two ways to obtain $y_{12}$: either
multiply $x^{12} = x^5x^7$ and evaluate $y_{12} = u(x^{12})$; or
evaluate $y_{12} = v(x^5, x^7)$ avoiding the multiplication.  The
existence of such a function $v$ is the key assumption underlying our
method of reducing the number of multiplications needed.

A straightforward application is found in matrix powers, if from each
power we only need a single element $(X^i)_{p,q} =: u(X^i)$.  Let $X$
be a large $m \times m$ matrix, and assume that its powers $X^i, X^j$
have been computed. Then the element $(X^{i+j})_{p,q} = \sum_{r=1}^m
(X^i)_{p,r} (X^j)_{r,q} = v(X^i, X^j)$ can be directly evaluated in
$O(m)$ arithmetic operations -- essentially for free, compared to the
alternative of computing the full matrix product.  Another application
related to data clustering using subset convolution is given in
Section~\ref{sec:convolution}.

This setting gives raise to the problem of how to choose a minimal
number of powers of $x$, to be computed via repeated multiplication,
such that from them all $y_1,\ldots,y_n$ are obtained through $u$ and
$v$.  Superficially, this appears like an addition chain problem;
however, for solving it we shall encounter another problem in additive
number theory, namely the {\em postage stamp problem}.

We shall start with some definitions and preliminary observations in
the next section.  In Section~\ref{sec:algorithm} we provide an
algorithm for computing $y_1,\ldots,y_n$ with the help of stamp
chains, and in Section~\ref{sec:main} we present our main result,
which shows how stamp chains can be constructed from stamp bases.  In
Section~\ref{sec:known} we show how known properties of stamp bases
imply similar properties for stamp chains, and also we report three
extremal stamp bases corresponding to $k=24$.  An illustration of the
computational benefits and some final remarks are provided in the last
two sections of the paper.

%%%%%%%%%%%%%%%%%%%%%%%%%%%%%%%%%%%%%%%%%%%%%%%%%%%%%%%
\section{Definitions}
\label{sec:definition}
Introductory texts to addition chains are provided by Guy
\cite[pp. 168--171]{guy2004} and Knuth \cite[pp. 398--422]{knuth1969}.
For information about the postage stamp problem, see Guy
\cite[pp. 123--127]{guy2004} and Selmer \cite{selmer1986}.

\begin{notation}
  In the following, $k$ is a positive integer.  $A_k$, $B_k$ and $C_k$
  denote sets of $k$ positive integers.  Their elements will be
  indexed in increasing order starting with index 1, thus $A_k = \{a_1
  < \ldots < a_k\}$.  When $j<k$, the $j$-{\bf prefix} of~$A_k$ is
  $A_j = \{a_1,\ldots,a_j\}$.  As usual in combinatorics, $[c,d]$
  denotes the consecutive integers $\{c, c+1, \ldots, d\}$.
\end{notation}

\begin{definition}
  An integer $c$ is {\bf generated} by $A_k$, if $c=a_i$ or
  $c=a_h+a_i$ for some indices $1 \le h,i \le k$.  (Note that $h=i$ is
  allowed.)
\end{definition}

\begin{definition}
  $A_k$ is an {\bf addition chain} if $a_1 = 1$, and for
  $j=2,\ldots,k$, the element $a_j$ is generated by $A_{j-1}$.
\end{definition}
\begin{remark}
  In addition chain literature it is customary to start indexing from
  $a_0=1$, and not to count this zeroth element in the length of the
  chain (thus $a_0,\ldots,a_k$ is customarily defined to have length
  $k$).  We have here departed from this notation in order to ensure
  compatibility with the established notation for postage stamps.
  For the same reason we have used a set notation, instead of the
  more usual tuple notation.
\end{remark}

\begin{definition}
  $A_k$ is a {\bf stamp basis} for $n$, if every integer in $[1,n]$
  is generated by $A_k$.  The {\bf range} of $A_k$, denoted by
  $n(A_k)$, is the largest $n$ such that $A_k$ generates $[1,n]$.  The
  elements of a stamp basis are called {\bf stamps}.
\end{definition}

\begin{remark}
  In a stamp basis $a_1$ must be $1$, since otherwise $1$ is not
  generated.
\end{remark}

\begin{definition}
  The {\bf range} of $k$, denoted by $n(k)$, is the largest range
  attained by stamp bases of length $k$.  An {\bf extremal stamp
    basis} is one that attains this maximum.
\end{definition}

A stamp basis may be interpreted as a set of $k$ postage stamp
denominations, such that any integral postage fare up to $n$ can be
paid by attaching at most 2 stamps on an envelope.  The problem of
finding optimal bases is known as the {\em postage stamp problem}.  A
stamp basis is also known in the literature as an {\em additive
  2-basis}.  More generally, if $h$ stamps are allowed on the
envelope, the set of stamp denominations is called an {\em $h$-basis}
and the largest $n$ attained is called the {\em $h$-range}.  In this
work we consider exclusively the case $h=2$.

\begin{definition}
  A {\bf stamp chain} for $n$ is a set of integers that is both an
  addition chain, and a stamp basis for $n$.
\end{definition}

\begin{definition}
  The maximum range among $k$-length stamp chains is denoted by
  $\nbar(k)$.  An {\bf extremal stamp chain} (of length $k$) is one
  that attains this maximum.
\end{definition}

\begin{example}
  $A_5 = \{1,2,4,8,16\}$ is an addition chain, and in fact a
  minimal-length addition chain ending at 16.  It is not a
  particularly good postage stamp basis: its range is only $6$, since
  it does not generate $7$.
\end{example}

\begin{example}
  $B_5 = \{1,3,5,7,8\}$ is an extremal stamp basis of length 5, and has
  range $n(B_5)=16$.  However, it is not an addition chain, since for
  example $5$ is not generated by the prefix $\{1,3\}$.
\end{example}

\begin{example}
  $C_5 = \{1,2,4,6,7\}$ is a stamp chain of length 5, and has range
  $n(C_5)=14$.  As a stamp chain, it is extremal: no stamp chain of
  length 5 has range greater than 14.  The proof of this extremality
  follows from theorems that will be established in
  Section~\ref{sec:main}.
\end{example}

\begin{remark}
  Since any stamp chain is also a stamp basis, it follows that
  $\nbar(k) \le n(k)$.  The inequality may be strict, as seen in the
  previous two examples.
\end{remark}

%%%%%%%%%%%%%%%%%%%%%%%%%%%%%%%%%%%%%%%%%%%%%%%%%%%%%%%%%%%%%%%%%%%%%%
\section{Multiplication algorithm}
\label{sec:algorithm}

We now return to the task outlined in the introduction.  Given an
initial value~$x$, a positive integer $n$, an associative binary
operation (multiplication), and the zero-cost functions $u$ and $v$
such that $v(a,b)=u(ab)$, the task is to compute $y_1,\ldots,y_n$,
where $y_i = u(x^i)$.

The straightforward method computes all powers $x^2,\ldots,x^n$ and
uses $n-1$ multiplications.  To improve upon this, let $k < n$, and
let us perform $k-1$ multiplications with results $x^{a_j}$, where
$j=2,\ldots,k$.  Without loss of generality, we may assume that the
exponents $a_j$ are distinct and in increasing order, otherwise some
multiplications could be eliminated or rearranged.  The set $A_k =
\{a_1 < \ldots < a_k\}$, with $a_1=1$, will be called a {\bf
  multiplication plan}.

We now have two requirements for the choice of the multiplication plan
$A_k$:
\begin{enumerate}
\item $A_k$ must be an addition chain.  This ensures that for each
  $j=2,\ldots,k$, the exponent $a_j$ equals $a_h+a_i$ for some $1 \le
  h,i < j$, and thus $x^{a_j}$ can be computed with one multiplication
  as $(x^{a_h})(x^{a_i})$.
\item $A_k$ must be a stamp basis.  This ensures that for each integer
  $c \in [1,n]$, either $c=a_i$ or $c=a_h+a_i$ for some $h,i$, and
  thus $y_c$ can be computed at zero cost, either as $u(x^{a_i})$ or
  as $v(x^{a_h}, x^{a_i})$.
\end{enumerate}

Combining the requirements, we observe that a multiplication plan has
to be a stamp chain for $n$.  Conversely, given a $k$-length stamp
chain for $n$, the following algorithm computes $y_1,\ldots,y_n$ using
$k-1$ multiplications.  The first phase performs $k-1$ multiplications
and the second phase performs none, since it does only zero-cost
evaluations of $u$ and $v$.

\medskip
\noindent{\bf Algorithm~A}
\\{\bf Phase 1.} For each $j=2,\ldots,k$, find $h,i<j$ such that
$a_h+a_i=a_j$.  This is possible because $A_k$ is an addition chain.
Compute $x^{a_j} = (x^{a_h})(x^{a_i})$.
\\{\bf Phase 2.} For each integer $c \in [1,n]$, either $c$ is a
stamp, or there are two stamps $a_h,a_i$ such that $c = a_h+a_i$.  In the
first case, compute $y_c = u(x^c)$.  In the second case compute $y_c =
v(x^{a_h}, x^{a_i})$.
\medskip

\begin{example}
If $y_1,\ldots,y_{14}$ are sought, the multiplication plan has to be a
stamp chain with a range at least $14$.  In the previous section we
mentioned that $C_5 = \{1,2,4,6,7\}$ is a stamp chain for 14.  Using
this stamp chain, Algorithm~A will compute $y_1,\ldots,y_{14}$ in $5-1
= 4$ multiplications as follows:
\begin{enumerate}
\item Compute $xx=x^2$, $x^2x^2=x^4$, $x^2x^4=x^6$, and $x^6x=x^7$.
\item Compute $y_1=u(x), y_2 = u(x^2), y_3 = v(x, x^2), \ldots,
  y_{14}=v(x^7, x^7)$.
\end{enumerate}
\end{example}

%%%%%%%%%%%%%%%%%%%%%%%%%%%%%%%%%%%%%%%%%%%%%%%%%%%%%%%
\section{Constructing stamp chains}
\label{sec:main}
If $A_k$ is a stamp chain for $n$, then Algorithm~A computes the
values $y_1,\ldots,y_n$ using $k-1$ multiplications.  In order to
minimize the number of multiplications, we would like to find a stamp
chain as short as possible, with a range at least $n$.  Ideally, we
wish to identify an extremal stamp chain, since an extremal stamp
chain attains the maximum range for any given length $k$.

It may not be immediately clear how a stamp chain of a given length
could be found, other than by constructing stamp bases and checking
whether they also happen to be addition chains; or vice versa.
However, in this section we shall introduce a direct method for
converting any admissible stamp basis into a stamp chain.

\begin{definition}
  A stamp basis $A_k$ is {\bf admissible} if it generates all integers
  in $[1,a_k]$.
\end{definition}

\begin{remark}
  If $A_k$ is admissible, and $1 < c < a_j$, then $c$ is generated by
  $A_{j-1}$.
\end{remark}

The following lemma is an already established result for stamp
bases \cite{challis1993}.
\begin{lemma}
  \label{lemma:admbasis}
  An extremal stamp basis is admissible.
\end{lemma}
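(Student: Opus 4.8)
The plan is to argue by contradiction, exploiting the observation that admissibility is simply the statement that the largest stamp lies within the range. First I would record this reformulation: since $A_k$ generates exactly the gap-free interval $[1,n(A_k)]$ and nothing beyond $n(A_k)$ in that initial segment, $A_k$ is admissible if and only if $a_k \le n(A_k)$. Indeed, if $a_k \le n(A_k)$ then $[1,a_k] \subseteq [1,n(A_k)]$ is entirely generated; conversely, generating all of $[1,a_k]$ forces $n(A_k) \ge a_k$.

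So suppose, for contradiction, that $A_k$ is extremal yet not admissible, and write $n := n(A_k) = n(k)$. Non-admissibility then gives $a_k > n$. The key structural step is to notice that the largest stamp $a_k$ is useless for covering $[1,n]$: any integer generated using $a_k$ --- whether $a_k$ itself or a sum $a_k + a_i$ --- is at least $a_k > n$. Consequently every $c \in [1,n]$ is already generated by the prefix $A_{k-1}$, so $A_{k-1}$ is a stamp basis of length $k-1$ (it still contains $a_1 = 1$) with $n(A_{k-1}) \ge n$, and therefore $n(k-1) \ge n = n(k)$.

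To derive a contradiction I would establish that the range is \emph{strictly} increasing in the length, that is, $n(k) > n(k-1)$. This is the one point needing a small construction: starting from any extremal $(k-1)$-basis of range $N = n(k-1)$, adjoin the single new stamp $N+1$. The enlarged set still generates all of $[1,N]$, and it additionally generates $N+1$ as a lone stamp, so its range is at least $N+1$. Hence $n(k) \ge n(k-1)+1$, which contradicts the inequality $n(k-1) \ge n(k)$ obtained above. This contradiction shows that an extremal basis cannot leave a gap below its largest stamp, so it must be admissible.

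I expect the only genuine obstacle to be the strict monotonicity $n(k) > n(k-1)$; everything else is a direct unwinding of the definitions. The append-a-stamp construction dispatches it cleanly, though one should verify the degenerate small cases --- in particular that $k \ge 2$, so that the prefix $A_{k-1}$ is a legitimate nonempty basis, while the case $k=1$ (where $A_1 = \{1\}$ generates $[1,1]$ trivially) is admissible outright.
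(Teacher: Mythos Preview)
Your argument is correct. The paper does not actually supply a proof of this lemma; it simply cites it as an established result due to Challis (1993). Your proof is a clean self-contained version: the reformulation of admissibility as $a_k \le n(A_k)$ is right, the observation that a stamp exceeding the range contributes nothing to covering $[1,n]$ is the natural key step, and the append-$N{+}1$ construction for strict monotonicity of $n(\cdot)$ is valid (note in passing that $N+1 \notin B$ since $B$ does not even generate $N+1$, so the enlarged set really has $k$ elements). The degenerate case $k=1$ is handled as you say.
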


A similar property holds for stamp chains.
\begin{lemma}
  \label{lemma:admchain}
  An extremal stamp chain is admissible.
\end{lemma}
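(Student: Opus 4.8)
The plan is to argue by contradiction, following the template of the basis result (Lemma~\ref{lemma:admbasis}) but repairing the single step where the addition-chain constraint gets in the way. Suppose $A_k$ is an extremal stamp chain of range $n = \nbar(k)$ that fails to be admissible. Then the smallest integer not generated by $A_k$ is $c = n+1$, and non-admissibility forces $c \le a_k$; since $a_k$ is itself a stamp and hence generated, in fact $c < a_k$, so the top stamp satisfies $a_k > n$.

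First I would observe that $a_k$ is useless for the range: because $a_k > n$, any generation of an integer in $[1,n]$ involving $a_k$ would already exceed $n$, so the prefix $A_{k-1}$ (which is again an addition chain) already generates all of $[1,n]$, and in particular generates $n$ itself.

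At this point the basis proof simply replaces $a_k$ by the gap $c$ and checks that the range has grown, and this is exactly where I expect the main obstacle: the gap $c = n+1$ is by definition \emph{not} generated by $A_k$, so it is not a sum of two earlier elements and cannot serve as an addition-chain step. My fix is to insert $n$ rather than $c$. The key claim I would establish is that $n$ is available as a legal addition-chain element, that is, a sum of two strictly smaller stamps. Indeed $n$ is generated by $A_{k-1}$, and it cannot be generated as a single stamp $n = a_t$, for then $c = n+1 = a_t + a_1$ would be generated by $A_k$ (recall $a_1 = 1$), contradicting the choice of $c$; hence $n = a_h + a_{h'}$ with $a_h, a_{h'} < n$ and $n \notin A_k$.

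Finally I would form $A^\ast = A_{k-1} \cup \{n\}$, a set of $k$ integers, and verify two properties. It is an addition chain: listing its elements in increasing order, $n$ is generated by the smaller elements $a_h, a_{h'}$, and every element originally lying above $n$ retains its old representation as a sum of still-present smaller elements. It is a stamp basis of range at least $n+1$: it inherits all of $[1,n]$ from $A_{k-1}$, and it now generates $n+1 = n + a_1$ since both $n$ and $1$ are stamps. Thus $A^\ast$ is a length-$k$ stamp chain of range exceeding $\nbar(k)$, the desired contradiction. I expect the only remaining delicate point to be the insertion check when $A_{k-1}$ happens to contain stamps larger than $n$, so that $n$ lands in the interior of the chain rather than at its top; this is why I would phrase the addition-chain verification in terms of each element keeping a representation by strictly smaller elements, which makes interior insertion harmless.
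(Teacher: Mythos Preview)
Your proof is correct and is essentially the same as the paper's: what you call ``inserting $n$'' is precisely the paper's move of replacing $a_k$ by $c-1$ (since $c-1=n$), and the verification that the resulting set is a stamp chain of strictly larger range proceeds identically. Your write-up is in fact a bit more careful than the paper's about the case where $n$ lands in the interior of the chain rather than at the top.
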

\begin{proof}
  Let $A_k$ be a non-admissible stamp chain, and let $c = n(A_k)+1$,
  that is, $c$ is the smallest positive integer not generated by
  $A_k$.  It follows that $c-1$ is generated by $A_k$, and also that
  $c-1 \notin A_k$ (otherwise $c=1+(c-1)$ would be generated).  Let
  then $B_k = A_{k-1} \cup \{c-1\}$.  Now $B_k$ is a stamp basis that
  generates all integers in $[1, c]$, in particular it generates $c =
  1 + (c-1)$.  Thus $n(B_k) > n(A_k)$.  Furthermore, since $c-1$ is
  generated by $A_k$ but not an element of it, it follows that $c-1 =
  a_h+a_i = b_h+b_i$ for some indices $h,i$.  Thus $B_k$ is also an
  addition chain.

  Since $B_k$ is a stamp chain with $n(B_k) > n(A_k)$, it follows
  that $A_k$ is not extremal.
\end{proof}

Thus, in order to maximize the range of a stamp basis (stamp chain),
it is sufficient to consider only the admissible stamp bases (stamp
chains).

\begin{notation}
  If $A_k=\{a_1,\ldots,a_k\}$ is a set of integers and $s$ is an
  integer, then $A_k+s := \{a_1+s,\ldots,a_k+s\}$.
\end{notation}

\begin{lemma}
  \label{lemma:shift}
  If $A_k$ is a stamp basis for $n$, then $B_{k+1} = \{1\} \cup
  (A_k+1)$ is a stamp basis for $n+2$.
\end{lemma}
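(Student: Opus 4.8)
The plan is to prove the claim directly: I would show that every integer $m \in [1, n+2]$ is generated by $B_{k+1}$, by relating generation in $B_{k+1}$ to generation in $A_k$ through a uniform ``shift by $2$''. First I would record the shape of $B_{k+1}$. Because $A_k$ is a stamp basis, its least element is $a_1 = 1$, so $A_k + 1$ has least element $2$, and $B_{k+1} = \{1\} \cup (A_k + 1) = \{1, 2, a_2+1, \ldots, a_k+1\}$ consists of $k+1$ distinct positive integers. The crucial observation concerns how the two kinds of generation transform under the shift: a two-stamp sum $a_h + a_i$ of $A_k$ corresponds to the two-stamp sum $(a_h+1)+(a_i+1) = a_h + a_i + 2$ in $B_{k+1}$, while a single stamp $a_i$ of $A_k$ corresponds to $1 + (a_i+1) = a_i + 2$ in $B_{k+1}$. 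In either case, whatever value $c$ the set $A_k$ generates, the set $B_{k+1}$ generates $c + 2$.

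With this observation in hand, I would finish with a short case analysis over $m \in [1, n+2]$. The value $m=1$ is the stamp $1$, and $m = 2 = 1 + 1$. For $m \in [3, n+2]$, put $c = m - 2 \in [1, n]$; since $A_k$ is a stamp basis for $n$, the integer $c$ is generated by $A_k$, and by the shift observation $B_{k+1}$ then generates $c + 2 = m$. Hence every integer in $[1, n+2]$ is generated by $B_{k+1}$, which is exactly the definition of being a stamp basis for $n+2$.

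The step requiring the most care is the single-stamp case, and this is really the heart of the argument rather than a genuine obstacle. When $c$ is realized in $A_k$ as a single stamp $a_i$, the shifted value $c+2 = a_i+2$ is \emph{not} a single stamp of $B_{k+1}$ (that stamp is only $a_i+1$); it must instead be obtained as the two-stamp sum $1 + (a_i+1)$. This is precisely why the new stamp $1$ has to be adjoined: without it, the shift-by-$2$ correspondence would fail to cover the values coming from single stamps of $A_k$, and the small value $m=2$ would likewise be lost. Once the role of the adjoined $1$ is recognized, the remaining verification is routine.
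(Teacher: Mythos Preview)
Your proof is correct and follows essentially the same approach as the paper: both handle $m\le 2$ directly and, for $m\ge 3$, reduce to generation of $m-2$ in $A_k$, covering the single-stamp case via $1+(a_i+1)$ and the two-stamp case via $(a_h+1)+(a_i+1)$. Your additional remarks on why the adjoined $1$ is needed and the verification that $B_{k+1}$ really has $k{+}1$ elements are helpful expository touches, but the underlying argument is the same.
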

\begin{proof}
  Let $c \in [1, n+2]$ be arbitrary.  If $c \le 2$, then $B_{k+1}$
  generates it either as $b_1=1$, or as $b_1+b_1=1+1=2$.  If $c \ge
  3$, let $c'=c-2$.  Since $c' \in [1, n]$, there is either one stamp
  $a_h=c'$ or two stamps $a_h+a_i=c'$.  In the first case, $b_1 +
  b_{h+1} = 1 + (a_h+1) = c'+2 = c$.  In the second case,
  $b_{h+1}+b_{i+1} = (1+a_h)+(1+a_i) = c'+2 = c$.  This proves that
  $B_{k+1}$ generates $[1,n+2]$.
\end{proof}

Note that the previous lemma gives only a lower bound for the range of
the new basis (consider $A_2 = \{1,4\}$, which has $n(A_2)=2$ but
$n(B_3) = n(\{1,2,5\}) = 7 > 2+2$).  However, for {\em admissible}
bases we have a stronger result in the following theorem.

\begin{theorem}
  \label{thm:main}
  If $A_k$ is an admissible stamp basis with range $n$, then $B_{k+1} =
  \{1\} \cup (A_k+1)$ is an admissible stamp chain with range $n+2$.
\end{theorem}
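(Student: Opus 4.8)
The plan is to verify the three defining properties of an admissible stamp chain one at a time: that $B_{k+1}$ is a stamp basis of range exactly $n+2$, that it is an addition chain, and that it is admissible. The stamp-basis part, with range at least $n+2$, is already handed to us by Lemma~\ref{lemma:shift}, so the real content lies in establishing the addition-chain property and in pinning the range down to exactly $n+2$. To fix notation, write $A_k=\{a_1<\cdots<a_k\}$ with $a_1=1$; then the elements of $B_{k+1}$ in increasing order are $b_1=1$ and $b_{j+1}=a_j+1$ for $j=1,\ldots,k$, so that $b_2=2$ and, for $j\ge 2$, $b_j=a_{j-1}+1$.

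For the addition-chain property I would check that each $b_j$ with $j\ge 2$ is generated by the prefix $B_{j-1}$. The base step $b_2=2=b_1+b_1$ is immediate. For $j\ge 3$ the target is $b_j=a_{j-1}+1$, and the key move is to generate the slightly smaller integer $a_{j-1}-1$ from the prefix $A_{j-2}$ of the \emph{original} basis. This is exactly where admissibility enters: by the remark following the definition of admissibility, applied at index $j-1$, any $c$ with $1<c<a_{j-1}$ is generated by $A_{j-2}$. Taking $c=a_{j-1}-1$ yields either $a_{j-1}-1=a_i$ or $a_{j-1}-1=a_i+a_{i'}$ with all indices at most $j-2$. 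Shifting each involved stamp up by $1$ then expresses $b_j$ as $b_{i+1}+b_1$ or $b_{i+1}+b_{i'+1}$, each a sum of elements of $B_{j-1}$, since $i+1,i'+1\le j-1$. The only values escaping the hypothesis $1<a_{j-1}-1$ are $a_{j-1}\in\{1,2\}$; the first occurs only for $j=2$ (already handled), and $a_{j-1}=2$ gives $b_j=3=b_1+b_2$. I expect this verification — and in particular the bookkeeping that the shifted indices stay inside the prefix — to be the main obstacle, as it is the one point where the admissibility hypothesis is indispensable.

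Finally, to nail the range at $n+2$ I would show that $n+3$ is not generated by $B_{k+1}$. It cannot be a single element, since every $b_i\le a_k+1\le n+1$ (admissibility of $A_k$ forces $a_k\le n$). A sum $b_i+b_j$ involving $b_1=1$ would force the other summand to equal $n+2>a_k+1$, which is impossible; and a sum of two shifted stamps gives $(a_{i-1}+1)+(a_{j-1}+1)=n+3$, hence $a_{i-1}+a_{j-1}=n+1$, contradicting that $A_k$ has range $n$. Thus $n(B_{k+1})=n+2$. Admissibility of $B_{k+1}$ is then immediate: since $b_{k+1}=a_k+1\le n+1\le n+2$, the interval $[1,b_{k+1}]$ is contained in $[1,n+2]$, every element of which is generated by Lemma~\ref{lemma:shift}. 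Combining the three parts gives that $B_{k+1}$ is an admissible stamp chain with range $n+2$.
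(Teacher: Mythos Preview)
Your proof is correct and follows essentially the same approach as the paper's: invoke Lemma~\ref{lemma:shift} for the stamp-basis lower bound, use admissibility of $A_k$ to show each $b_j$ is generated by its prefix, and rule out $n+3$ to pin down the range. The only cosmetic difference is that the paper streamlines the addition-chain step by re-applying Lemma~\ref{lemma:shift} to the prefix $A_{j-2}$ (which is itself a stamp basis for $a_{j-1}-1$, hence $B_{j-1}$ generates $a_{j-1}+1=b_j$), whereas you carry out the shift by hand and explicitly treat the boundary case $a_{j-1}=2$.
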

\begin{proof}
  By Lemma~\ref{lemma:shift}, $B_{k+1}$ is a stamp basis for $n+2$.
  Because $A_k$ is admissible, $n \ge a_k$, thus $n+2 \ge a_k+1 =
  b_{k+1}$, and $B_{k+1}$ is admissible.

  To prove that $B_{k+1}$ is also an addition chain, note first that by
  construction $b_1=1$.  Clearly $b_2 = 2 = b_1+b_1$ is generated by the
  prefix $B_1$.  Let then $3 \le j \le k+1$.  Since $A_k$ is admissible,
  $A_{j-2}$ generates $a_{j-1}-1$, and by Lemma~\ref{lemma:shift} the
  prefix $B_{j-1} = \{1\} \cup (A_{j-2}+1)$ generates $a_{j-1}+1 = b_j$.

  Finally, let us prove that $n(B_{k+1})$ does not exceed $n+2$, in
  particular, that $B_{k+1}$ does not generate $n+3$.  Since $A_k$ is
  admissible, $a_k \le n$, thus $b_{k+1} \le n+1$.  Thus $n+3 \notin
  B_{k+1}$.  Suppose then $n+3 = b_h + b_i$.  This would imply that
  $b_h, b_i > 1$, and then $a_{h-1}+a_{i-1} = n+1$, contradicting the
  assumption that $n(A_k)=n$.
\end{proof}

While the construction in Theorem~\ref{thm:main} has the consequence
of extending the range of the stamp basis by 2, this is not the main
reason for the construction.  For our purposes the crucial consequence
of Theorem~\ref{thm:main} is that the new basis $B_{k+1}$ is
guaranteed to be an addition chain, even if $A_k$ is not.  This
ensures that $B_{k+1}$ can be used as a multiplication plan in
Algorithm~A.

\begin{example} 
  $A_5=\{1,3,5,7,8\}$ is an admissible stamp basis for $n=16$, but it is
  not an addition chain.  However, by Theorem~\ref{thm:main}, $B_6 =
  \{1\} \cup (A_5 + 1) = \{1,2,4,6,8,9\}$ is an admissible stamp chain
  for $n=18$.
\end{example}

Theorem \ref{thm:main} shows how to construct a stamp chain of length
$k$ from any admissible stamp basis of length $k-1$.  Conversely, we
shall prove that this construction produces {\em all} admissible stamp
chains of length $k>1$.  For length $k=1$, the only stamp chain is
$B_1=\{1\}$.

\begin{theorem}
  \label{thm:converse}
  If $k>1$ and $B_k$ is an admissible stamp chain with range $n$, then
  $A_{k-1} = \{b_2-1, \ldots, b_k-1\}$ is an admissible stamp basis with
  range $n-2$.
\end{theorem}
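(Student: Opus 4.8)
The plan is to recognize this statement as the exact inverse of the construction in Theorem~\ref{thm:main}. Since $B_k$ is an addition chain we have $b_1=1$, and $b_2$ is generated by $\{b_1\}$ while exceeding it, so $b_2=2$ and hence $a_1 = b_2-1 = 1$. Writing $a_j = b_{j+1}-1$ shows that $A_{k-1}$ consists of $k-1$ distinct positive integers with $a_1=1$, and that $B_k = \{1\}\cup(A_{k-1}+1)$. So $A_{k-1}$ is a legitimate candidate set; the real work is to recover the three asserted properties (stamp basis, range $n-2$, admissibility) directly from the hypotheses on $B_k$, without assuming any of them for $A_{k-1}$, which would be circular.

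First I would record a bound used throughout: $b_k \le n-1$. Admissibility of $B_k$ gives $[1,b_k]$ generated, hence $b_k \le n$; and $b_k=n$ is impossible, since then $b_1+b_k = n+1$ would be generated, contradicting $n(B_k)=n$. Consequently the largest element of $A_{k-1}$ satisfies $a_{k-1} = b_k-1 \le n-2$.

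The central step is to show that $A_{k-1}$ generates $[1,n-2]$. The key simplification is that, because $B_k$ is an addition \emph{chain}, every stamp $b_j$ with $j\ge 2$ is itself a two-stamp sum $b_g+b_i$ (it is generated by the prefix $B_{j-1}$, and cannot equal a single smaller stamp since $b_j$ is strictly largest in $B_j$); therefore every integer $\ge 3$ that $B_k$ generates is in fact a two-stamp sum $b_h+b_i$. Now for $c\in[1,n-2]$ the value $c+2\in[3,n]$ is generated by $B_k$, so $c+2 = b_h+b_i$ with the indices not both equal to $1$. If $b_h,b_i>1$ then $c=(b_h-1)+(b_i-1)=a_{h-1}+a_{i-1}$ is a two-stamp sum of $A_{k-1}$; if exactly one equals $1$, say $b_h=1$, then $c=b_i-1=a_{i-1}$ is a single stamp. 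Either way $c$ is generated. I expect the main obstacle to be precisely the single-stamp case $c+2=b_j$: dealing with it is exactly what forces the use of the addition chain property of $B_k$ to unfold $b_j$ into $b_g+b_i$, and this is where being a stamp chain rather than merely a basis is essential.

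Finally I would pin the range at exactly $n-2$ by showing $n-1$ is not generated by $A_{k-1}$. A single-stamp representation $n-1=a_j$ is ruled out by $a_j\le a_{k-1}\le n-2$; a two-stamp representation $n-1 = a_h+a_i = (b_{h+1}+b_{i+1})-2$ would give $b_{h+1}+b_{i+1}=n+1$ generated by $B_k$, again contradicting $n(B_k)=n$. Hence $n(A_{k-1})=n-2$. Admissibility is then immediate: since $a_{k-1}\le n-2$ and $A_{k-1}$ generates $[1,n-2]$, it generates $[1,a_{k-1}]$, so $A_{k-1}$ is admissible.
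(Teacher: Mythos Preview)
Your proof is correct and follows essentially the same route as the paper: establish $b_1=1$, $b_2=2$; use the addition-chain property to reduce every generated integer $\ge 3$ to a genuine two-stamp sum in $B_k$; then shift down by one in each summand to exhibit generation in $A_{k-1}$. Your treatment is in fact slightly more explicit than the paper's at the end---you spell out separately why $n-1$ cannot be a two-stamp sum in $A_{k-1}$ (via the contradiction $b_{h+1}+b_{i+1}=n+1$), whereas the paper compresses this into a terse ``Hence''---but the underlying argument is identical.
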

\begin{proof}
  We will first prove that $A_{k-1}$ generates all integers in
  $[1,n-2]$.  Since by assumption $B_k$ is an addition chain, its
  smallest two elements must be 1 and 2.  Thus $a_1 = b_2-1 = 1$, and
  $A_{k-1}$ generates $1$ and $2$.

  Let $c \in [3,n-2]$ be arbitrary, and let $c' = c+2$.  Since $B_k$
  is a stamp basis, $c'$ is generated either by one stamp $b_j=c'$ or
  by two stamps $b_h+b_i=c'$.  But in the first case, $c' = b_j =
  b_h+b_i$ for some $h,i < j$, because $B_k$ is an addition chain.
  Thus in either case we have $c' = b_h+b_i$ for some $h,i$.  Without
  loss of generality we may assume $h \ge i$.  Now consider separately
  the possibilities $i=1$ and $i>1$.

  If $i=1$, then $b_i=1$, and $c = c'-2 = b_h+b_i-2 = b_h-1 = a_{h-1}$
  is generated by a single stamp $a_{h-1}$.  Note that we have
  necessarily $h>1$, so $a_{h-1}$ indeed exists.  This is because we
  have assumed that $c \ge 3$, and consequently $b_h+b_i = c' \ge 5$
  implying that $b_h \ge 4$.

  If $i>1$, then $b_i>1$, and $c = c'-2 = b_h+b_i-2 = (b_h-1)+(b_i-1)
  = a_{h-1}+a_{i-1}$, so $c$ is generated by the two stamps $a_{h-1}$
  and $a_{i-1}$.  Note that, by assumption, $h \ge i > 1$, so the
  stamps $a_{h-1}$ and $a_{i-1}$ indeed exist.

  We have now proven that any $c \in [1, n-2]$ is generated by either
  one or two stamps from $A_{k-1}$.  In other words, $A_{k-1}$ is
  a stamp basis with range at least $n-2$.
  
  Since by assumption $n(B_k)=n$ exactly, it follows that $B_k$ does
  not generate $n+1$.  From this it follows that $b_k < n$, thus
  $a_{k-1} < n-1$.  Hence $A_{k-1}$ does not generate $n-1$, and the
  range is $n(A_{k-1}) = n-2$ exactly.

  Finally, since $n(A_{k-1}) = n-2 > b_k-2 = a_{k-1}-1$, it follows
  that $A_{k-1}$ is admissible.
\end{proof}

By Theorems~\ref{thm:main} and \ref{thm:converse}, admissible stamp
bases of length $k$ and range $n$ are in one-to-one correspondence
with admissible stamp chains of length $k+1$ and range $n+2$.  Since
extremal stamp bases and extremal stamp chains are always admissible,
we have the following corollaries for all $k>1$.

\begin{corollary}
  \label{cor:extremal}
  $B_k$ is an extremal stamp chain if and only if $B_k = \{1\} \cup
  (A_{k-1}+1)$, where $A_{k-1}$ is an extremal stamp basis.  Then also
  their ranges are related as $n(B_k) = n(A_{k-1}) + 2$.
\end{corollary}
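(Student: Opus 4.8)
The plan is to read the corollary off the one-to-one correspondence furnished by Theorems~\ref{thm:main} and~\ref{thm:converse}, using Lemmas~\ref{lemma:admbasis} and~\ref{lemma:admchain} to strip away the admissibility hypotheses that those theorems require. The first preparatory step I would carry out is to establish the purely numerical relation $\nbar(k) = n(k-1)+2$. Because an extremal stamp chain of length $k$ is admissible (Lemma~\ref{lemma:admchain}) and an extremal stamp basis of length $k-1$ is admissible (Lemma~\ref{lemma:admbasis}), both maxima are attained by admissible objects. The correspondence pairs each admissible basis of length $k-1$ and range $m$ with an admissible chain of length $k$ and range $m+2$, so taking the maximum over each side yields $\nbar(k) = n(k-1)+2$.

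For the backward direction, I would start from an extremal stamp basis $A_{k-1}$, invoke Lemma~\ref{lemma:admbasis} to conclude it is admissible with range $n(k-1)$, and apply Theorem~\ref{thm:main} to obtain that $B_k = \{1\}\cup(A_{k-1}+1)$ is an admissible stamp chain of range $n(k-1)+2 = \nbar(k)$. Hence $B_k$ is extremal, and the stated range relation $n(B_k)=n(A_{k-1})+2$ holds directly. For the forward direction, I would take an extremal stamp chain $B_k$, use Lemma~\ref{lemma:admchain} to get admissibility and range $\nbar(k)$, and, since $k>1$, apply Theorem~\ref{thm:converse} to produce the admissible stamp basis $A_{k-1}=\{b_2-1,\ldots,b_k-1\}$ of range $\nbar(k)-2 = n(k-1)$, which is therefore extremal.

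The step I expect to require the most care is verifying that the construction of Theorem~\ref{thm:main}, applied to this recovered $A_{k-1}$, returns \emph{exactly} the given $B_k$ — that is, that the two constructions are genuinely mutual inverses rather than merely range-shifting maps. Here I would use the fact that $B_k$, being an addition chain, has $b_1=1$ and $b_2=2$, so that $\{1\}\cup(A_{k-1}+1) = \{1\}\cup\{b_2,\ldots,b_k\} = \{b_1,\ldots,b_k\} = B_k$. This identity pins down $B_k = \{1\}\cup(A_{k-1}+1)$ and, together with the two directions above, completes the equivalence and the range formula.
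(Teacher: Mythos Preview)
Your proposal is correct and follows essentially the same approach as the paper: the paper simply asserts that the corollary follows from the one-to-one correspondence of Theorems~\ref{thm:main} and~\ref{thm:converse} together with Lemmas~\ref{lemma:admbasis} and~\ref{lemma:admchain}, and you have filled in precisely those details. Your preliminary derivation of $\nbar(k)=n(k-1)+2$ is what the paper records separately as Corollary~\ref{cor:range}, and your explicit check that the two constructions are mutual inverses (via $b_1=1$) is exactly the point the paper leaves implicit in the phrase ``one-to-one correspondence.''
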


\begin{corollary}
  \label{cor:range}
  $\nbar(k) = n(k-1) + 2$.
\end{corollary}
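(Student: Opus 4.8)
The plan is to derive this equality from the constructions already established, since the corollary is essentially a repackaging of the one-to-one correspondence between admissible stamp bases of length $k-1$ and admissible stamp chains of length $k$. I would prove it by bounding $\nbar(k)$ from both sides, invoking the two admissibility lemmas to reduce everything to the extremal (hence admissible) objects, where Theorems~\ref{thm:main} and~\ref{thm:converse} apply directly.

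For the lower bound $\nbar(k) \ge n(k-1)+2$, I would start from an extremal stamp basis $A_{k-1}$ of length $k-1$, so that $n(A_{k-1}) = n(k-1)$. By Lemma~\ref{lemma:admbasis} this basis is admissible, so Theorem~\ref{thm:main} produces an admissible stamp chain $B_k = \{1\} \cup (A_{k-1}+1)$ of length $k$ with range $n(k-1)+2$. Since $\nbar(k)$ is by definition the largest range attained by any $k$-length stamp chain, this exhibited chain forces $\nbar(k) \ge n(k-1)+2$.

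For the upper bound $\nbar(k) \le n(k-1)+2$, I would run the argument in reverse, starting from an extremal stamp chain $B_k$ of length $k$, so $n(B_k) = \nbar(k)$. By Lemma~\ref{lemma:admchain} it is admissible, and since $k>1$ Theorem~\ref{thm:converse} yields an admissible stamp basis $A_{k-1} = \{b_2-1,\ldots,b_k-1\}$ of length $k-1$ with range $\nbar(k)-2$. As no $(k-1)$-length stamp basis can exceed range $n(k-1)$, we get $\nbar(k)-2 \le n(k-1)$, i.e.\ $\nbar(k) \le n(k-1)+2$. Combining the two inequalities gives the claimed equality; alternatively one can read it off in a single line from Corollary~\ref{cor:extremal}, whose range relation $n(B_k)=n(A_{k-1})+2$ holds precisely for the extremal pairs.

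There is no real obstacle here: all the substantive work lives in Theorems~\ref{thm:main} and~\ref{thm:converse}. The only point that requires care --- and the reason the two admissibility lemmas are needed --- is that those theorems speak only about \emph{admissible} objects, whereas the definitions of $\nbar(k)$ and $n(k-1)$ quantify over \emph{all} chains and bases. Lemmas~\ref{lemma:admbasis} and~\ref{lemma:admchain} close this gap by guaranteeing that the extremal witnesses on each side are automatically admissible, so the correspondence applies exactly where the maxima are attained. I would also keep in view the standing hypothesis $k>1$, which is what makes the unshift in Theorem~\ref{thm:converse} (and hence the upper bound) legitimate.
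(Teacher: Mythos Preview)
Your proposal is correct and follows essentially the same route as the paper: the paper derives both corollaries in one breath from the bijection between admissible stamp bases of length $k-1$ and admissible stamp chains of length $k$ (Theorems~\ref{thm:main} and~\ref{thm:converse}), together with the fact that extremal objects on each side are admissible (Lemmas~\ref{lemma:admbasis} and~\ref{lemma:admchain}). You have simply written out the two inequalities explicitly, which is a perfectly faithful unpacking of the paper's one-sentence justification.
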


%%%%%%%%%%%%%%%%%%%%%%%%%%%%%%%%%%%%%%%%%%%%%%%%%%%%%%%%%%%%%%%%%%%%%%
\section{Some properties of stamp chains}
\label{sec:known}
Known properties of (extremal) stamp bases carry over naturally to
(extremal) stamp chains.  For example, some asymptotic lower and upper
bounds for $n(k)$ are known \cite{guy2004}:
\begin{equation*}
  \frac{2}{7} k^2 + O(k) \le n(k) \le 0.4802 k^2 + O(k).
\end{equation*}
Since $\nbar(k) = n(k-1)+2$ by Corollary~\ref{cor:range}, it follows
that also
\begin{equation*}
  \frac{2}{7} k^2 + O(k) \le \nbar(k) \le 0.4802 k^2 + O(k).
\end{equation*}
This means that for large $n$, roughly $\sqrt{(7/2) n}$
multiplications are sufficient to compute $y_1,\ldots,y_n$ through
Algorithm~A.

\begin{table}[b]
\begin{tabular}{l}
\hline
1   3   4   6  10  13  15  21  29  37  45  53  61  69  77  85  91  93  96 100 102 103 105 106 * \\
1   3   4   6  10  13  15  21  29  37  45  53  61  69  77  85  93  97  99 102 103 104 106 108 \\
1   3   4   6  10  13  15  21  29  37  45  53  61  69  77  85  93  97  99 102 103 106 108 112 \\
\hline
\end{tabular}
\caption{The extremal bases of length $24$.  The basis marked with * is symmetric.}
\label{table:newbases}
\end{table}

All extremal stamp bases of lengths $k = 1,\ldots,23$ are previously
known.  Challis and Robinson list them for $k=3,\ldots,22$
\cite[pp. 7--8]{challis2010}, and for $k=23$ in an addendum.  We have
computed the extremal stamp bases of length $k=24$, using an
exhaustive search based on the algorithm described by Challis
\cite{challis1993}.  The search took 606 CPU days on parallel 2.6~GHz
AMD~Opteron processors.  The new extremal bases have range 212, and
are shown in Table~\ref{table:newbases}.  Note that the symmetric
basis appears already in Mossige's list of symmetric bases
\cite{mossige1981}, but until now it was not known to be extremal.

Extremal stamp chains of lengths $k=2,\ldots,25$ can be constructed
from known extremal stamp bases by Corollary~\ref{cor:extremal}.
Since $\nbar(25) = n(24)+2 = 214$, these chains provide the
minimum-length multiplication plans for computing $y_1,\ldots,y_n$ for
$n \le 214$.

\begin{table}[tb]
  %
  % Printed with Matlab script ADPOST_TABLE1.M
  % Stamp bases from Challis (2010)
  % Stamp chains created via Theorem 1
  %
  \centering
  \begin{tabular}{rrl|rrl}
    $k$& $n(k)$ & stamp basis &
    $k$& $\nbar(k)$ & stamp chain \\
    \hline
    1 &  2 &   1                             &   2 &  4 &   1  2                            \\ 
    2 &  4 &   1  3                          &   3 &  6 &   1  2  4                         \\ 
    3 &  8 &   1  3  4                       &   4 & 10 &   1  2  4  5                      \\ 
    4 & 12 &   1  3  5  6                    &   5 & 14 &   1  2  4  6  7                   \\ 
    5 & 16 &   1  3  5  7  8                 &   6 & 18 &   1  2  4  6  8  9                \\ 
    6 & 20 &   1  2  5  8  9 10              &   7 & 22 &   1  2  3  6  9 10 11             \\ 
    7 & 26 &   1  2  5  8 11 12 13           &   8 & 28 &   1  2  3  6  9 12 13 14          \\ 
    8 & 32 &   1  2  5  8 11 14 15 16        &   9 & 34 &   1  2  3  6  9 12 15 16 17       \\ 
  \end{tabular}
  \caption{Some extremal stamp bases for $k\le 8$, and the corresponding
    extremal stamp chains for $k\le 9$.}
  \label{table:chains}
\end{table}

\begin{table}[bt]
  %
  % Printed with Matlab script ADPOST_TABLE2.M
  %
  \centering
  \begin{tabular}{rr|rr|rr}
    $k$& $\nbar(k)$ & $k$& $\nbar(k)$ & $k$& $\nbar(k)$ \\
    \hline
    1  &   2 &      11 &  48 &       21 & 154 \\
    2  &   4 &      12 &  56 &       22 & 166 \\
    3  &   6 &      13 &  66 &       23 & 182 \\
    4  &  10 &      14 &  74 &       24 & 198 \\
    5  &  14 &      15 &  82 &       25 & 214 \\
    6  &  18 &      16 &  94 \\
    7  &  22 &      17 & 106 \\
    8  &  28 &      18 & 118 \\
    9  &  34 &      19 & 130 \\
    10 &  42 &      20 & 142 \\
   \end{tabular}
  \caption{Known values of $\nbar$.}
  \label{table:nbar}
\end{table}

The connection between stamp bases and stamp chains is illustrated in
Table~\ref{table:chains}, which contains one extremal stamp basis for
each $k=1,\ldots,8$, and the corresponding extremal stamp chain
constructed by Corollary~\ref{cor:extremal}.  In
Table~\ref{table:nbar} we list all known values of $\nbar(k)$.  They
were computed by applying Corollary~\ref{cor:range} to the ranges of
previously known extremal stamp bases \cite{challis2010,a001212}, and
of our new $k=24$ stamp bases.  A listing of known extremal stamp
bases and extremal stamp chains can be found in
Tables~\ref{table:allbases} and~\ref{table:allchains} at the end of
this article.

Several authors have observed that many extremal stamp bases (but not
all) are symmetric in the sense that $a_i + a_{k-i} = a_k$ for all
$i=1,\ldots,k-1$.  The corresponding extremal stamp chains are then,
by construction, symmetric in the sense that $a_i + a_{k+1-i} = a_1 +
a_k$ for all $i=1,\ldots,k$.  Symmetric stamp bases up to $k=30$ are
reported by Mossige \cite{mossige1981}.

If a stamp chain is needed for $n$ so large that no extremal stamp
basis is currently known for $n-2$, one can instead take any
admissible stamp basis and convert it into an admissible stamp chain
using Theorem~\ref{thm:main}.  Very good admissible stamp bases
(although not necessarily extremal) for up to $k=82$ and $n=2100$ are
listed by Challis and Robinson \cite[p. 6]{challis2010}.

%%%%%%%%%%%%%%%%%%%%%%%%%%%%%%%%%%%%%%%%%%%%%%%%%%%%%%%%%%%%%%%%%%%%%%
\section{An application to subset convolution}
\label{sec:convolution}
The multiplication in Algorithm~A may in general be any associative
binary operation.  In the introduction a simple example related to
matrix multiplication was mentioned. Here, we consider a more detailed
application to a data clustering problem.

In previous work \cite{kohonen2013}, we have considered a class of
Bayesian probability models where $N$ items of data belong to $c$
clusters, such that $c$ is an unknown integer in the range
$1,\ldots,n$, and $n \le N$.  The exact posterior distribution for
$c$ is computed using an algorithm whose time requirement is
exponential in $N$.  The algorithm first computes a likelihood
function $f$ for each possible cluster, that is, for each subset of
$\{1,\ldots,N\}$.  This computation takes time $O(2^N)$, and its
result is a table of $2^N$ numbers.

The next, and the most time-consuming step of the algorithm is to
compute successively the values of $f_2 = f*f, f_3 = f_2 * f, \ldots,
f_n = f_{n-1} * f$, where $*$ is an operation called subset
convolution. Subset convolution takes as its input two functions, each
represented by a table of $2^N$ numbers, and computes another such
function.  The operation is associative, so for the current purposes
it is a multiplication.  A single subset convolution takes either
$O(3^N)$ or $O(2^NN^2)$ time, depending on the algorithm used.

However, to obtain the posterior probability for $c$, 
the full tables $f_1, \ldots, f_n$ are actually not needed. Instead, we only need the
last element from each table, corresponding to $f_c(U)$, where
$U=\{1,\ldots,N\}$ is the set of all data items.  Thus, it is necessary to compute the values of
$y_c = u(f_c) := f_c(U)$, for $c=1,\ldots,n$.  Furthermore, if $f_a$
and $f_b$ have been fully computed, and $c=a+b$, then the single value
$f_c(U) = (f_a * f_b)(U)$ can be computed in only $O(2^N)$ time.  Hence,
computing $v(f_a, f_b) := u(f_a * f_b)$ is also fast, compared to
performing the full subset convolution $f_a * f_b$.

Since $u$ and $v$ are much faster to compute than $*$, our aim is to find
a minimal set of values of $c$, for which the full subset convolution
$f_c$ is computed, since for these values, $y_c = u(f_c(U))$ then refers to only a
table lookup.  For other $c \in [1,\ldots,n]$, the quantity $y_c$ is
computed as $v(f_i, f_j)$, where $f_i$ and $f_j$ have been computed in
full.  The end result is that $y_1,\ldots,y_n$ are obtained with only
$k-1$ subset convolutions, where $k$ is the length of a stamp chain
for $n$.  In comparison, the straightforward algorithm performs $n-1$
subset convolutions.

To provide a concrete example, for $N=20$ and $n=20$ straightforward
multiplication performs $n-1 = 19$ subset convolutions to compute
$f_2,\ldots,f_{20}$, which takes approximately 7 minutes of CPU time
on a 2.4~GHz AMD~Opteron processor.  However, from
Table~\ref{table:chains} we find an extremal stamp chain
\begin{equation*}
  B_7 = \{1, 2, 3, 6, 9, 10, 11\},
\end{equation*}
which has range $22 \ge n$.  Using this chain and Algorithm~A, only
$6$ subset convolutions are required:
\begin{align*}
  f_2 &= f * f \\
  f_3 &= f_2 * f \\
  f_6 &= f_3 * f_3 \\
  f_9 &= f_6 * f_3 \\
  f_{10} &= f_9 * f \\
  f_{11} &= f_{10} * f \\
\end{align*}
Consequently, the posterior distribution for $c$ is obtained in about
one third ($6/19$) of the time required by the straightforward
algorithm.

%%%%%%%%%%%%%%%%%%%%%%%%%%%%%%%%%%%%%%%%%%%%%%%%%%%%%%%%%%%%%%%%%%%%%%
\section{Discussion}
The existing bodies of literature on both addition chains and on
postage stamps are substantial.  However, this far they seem to be
almost completely disjoint.  We have here explored the connection
between these two concepts, and presented a theorem establishing a
relationship between addition chains and stamp bases. The theorem
provides a way to construct an optimal procedure to perform certain
multiplicative computational operations, illustrated by an application
to data clustering using subset convolution. As a future research
topic, it would be interesting to explore possible other useful
connections between addition chains and the postage stamp problem.

%%%%%%%%%%%%%%%%%%%%%%%%%%%%%%%%%%%%%%%%%%%%%%%%%%%%%%%%%%%%%%%%%%%%%%
\section{Acknowledgments}
This research was funded by the ERC grant no. 239784 and AoF grant no. 251170.

The authors wish to thank the anonymous referee for invaluable
comments and corrections.

%%%%%%%%%%%%%%%%%%%%%%%%%%%%%%%%%%%%%%%%%%%%%%%%%%%%%%%%%%%%%%%%%%%%%%

\bigskip
\hrule
\bigskip
\noindent 2000 {\it Mathematics Subject Classification}:
Primary 11B13.

\noindent \emph{Keywords: } additive basis, addition chain, matrix
multiplication, subset convolution.

\bigskip
\hrule
\bigskip
\noindent (Concerned with sequences \seqnum{A001212} \seqnum{A234941}.)

%%%%%%%%%%%%%%%%%%%%%%%%%%%%%%%%%%%%%%%%%%%%%%%%%%%%%%%%%%%%%%%%%%%%%%
% Full basis and chain tables printed with Matlab script ADPOST_TABLE34.M

\begin{table}[p]
  \centering
  \small
  \setlength{\tabcolsep}{1.6pt}
  \begin{tabular}{rr|rrrrrrrrrrrrrrrrrrrrrrrrr}
    $k$ & $n(k)$ && \multicolumn{10}{l}{stamp basis}\\
    \hline
 1 &   2 &&   1  \\ 
 2 &   4 &&   1 &   3  \\ 
 3 &   8 &&   1 &   3 &   4  \\ 
 4 &  12 &&   1 &   3 &   5 &   6  \\ 
 5 &  16 &&   1 &   3 &   5 &   7 &   8  \\ 
 6 &  20 &&   1 &   3 &   5 &   7 &   9 &  10  \\ 
 6 &  20 &&   1 &   2 &   5 &   8 &   9 &  10  \\ 
 6 &  20 &&   1 &   3 &   4 &   8 &   9 &  11  \\ 
 6 &  20 &&   1 &   3 &   5 &   6 &  13 &  14  \\ 
 6 &  20 &&   1 &   3 &   4 &   9 &  11 &  16  \\ 
 7 &  26 &&   1 &   3 &   4 &   9 &  10 &  12 &  13  \\ 
 7 &  26 &&   1 &   2 &   5 &   8 &  11 &  12 &  13  \\ 
 7 &  26 &&   1 &   3 &   5 &   7 &   8 &  17 &  18  \\ 
 8 &  32 &&   1 &   2 &   5 &   8 &  11 &  14 &  15 &  16  \\ 
 8 &  32 &&   1 &   3 &   5 &   7 &   9 &  10 &  21 &  22  \\ 
 9 &  40 &&   1 &   3 &   4 &   9 &  11 &  16 &  17 &  19 &  20  \\ 
10 &  46 &&   1 &   2 &   3 &   7 &  11 &  15 &  19 &  21 &  22 &  24  \\ 
10 &  46 &&   1 &   2 &   5 &   7 &  11 &  15 &  19 &  21 &  22 &  24  \\ 
11 &  54 &&   1 &   3 &   5 &   6 &  13 &  14 &  21 &  22 &  24 &  26 &  27  \\ 
11 &  54 &&   1 &   3 &   4 &   9 &  11 &  16 &  18 &  23 &  24 &  26 &  27  \\ 
11 &  54 &&   1 &   2 &   3 &   7 &  11 &  15 &  19 &  23 &  25 &  26 &  28  \\ 
11 &  54 &&   1 &   2 &   5 &   7 &  11 &  15 &  19 &  23 &  25 &  26 &  28  \\ 
12 &  64 &&   1 &   3 &   4 &   9 &  11 &  16 &  21 &  23 &  28 &  29 &  31 &  32  \\ 
13 &  72 &&   1 &   3 &   4 &   9 &  11 &  16 &  20 &  25 &  27 &  32 &  33 &  35 &  36  \\ 
14 &  80 &&   1 &   3 &   4 &   5 &   8 &  14 &  20 &  26 &  32 &  35 &  36 &  37 &  39 &  40  \\ 
14 &  80 &&   1 &   3 &   4 &   9 &  10 &  15 &  16 &  21 &  22 &  24 &  25 &  51 &  53 &  55  \\ 
14 &  80 &&   1 &   2 &   5 &   8 &  11 &  14 &  17 &  20 &  23 &  24 &  25 &  51 &  53 &  55  \\ 
15 &  92 &&   1 &   3 &   4 &   5 &   8 &  14 &  20 &  26 &  32 &  38 &  41 &  42 &  43 &  45 &  46  \\ 
16 & 104 &&   1 &   3 &   4 &   5 &   8 &  14 &  20 &  26 &  32 &  38 &  44 &  47 &  48 &  49 &  51 &  52  \\ 
17 & 116 &&   1 &   3 &   4 &   5 &   8 &  14 &  20 &  26 &  32 &  38 &  44 &  50 &  53 &  54 &  55 &  57 &  58  \\ 
18 & 128 &&   1 &   3 &   4 &   5 &   8 &  14 &  20 &  26 &  32 &  38 &  44 &  50 &  56 &  59 &  60 &  61 &  63 &  64  \\ 
19 & 140 &&   1 &   3 &   4 &   5 &   8 &  14 &  20 &  26 &  32 &  38 &  44 &  50 &  56 &  62 &  65 &  66 &  67 &  69 &  70  \\ 
20 & 152 &&   1 &   3 &   4 &   5 &   8 &  14 &  20 &  26 &  32 &  38 &  44 &  50 &  56 &  62 &  68 &  71 &  72 &  73 &  75 &  76  \\ 
21 & 164 &&   1 &   3 &   4 &   6 &  10 &  13 &  15 &  21 &  29 &  37 &  45 &  53 &  61 &  67 &  69 &  72 &  76 &  78 &  79 &  81 &  82  \\ 
21 & 164 &&   1 &   3 &   4 &   5 &   8 &  14 &  20 &  26 &  32 &  38 &  44 &  50 &  56 &  62 &  68 &  74 &  77 &  78 &  79 &  81 &  82  \\ 
21 & 164 &&   1 &   3 &   4 &   6 &  10 &  13 &  15 &  21 &  29 &  37 &  45 &  53 &  61 &  69 &  73 &  75 &  78 &  79 &  80 &  82 &  84  \\ 
21 & 164 &&   1 &   3 &   4 &   6 &  10 &  13 &  15 &  21 &  29 &  37 &  45 &  53 &  61 &  69 &  73 &  75 &  78 &  79 &  82 &  84 &  88  \\ 
22 & 180 &&   1 &   3 &   4 &   6 &  10 &  13 &  15 &  21 &  29 &  37 &  45 &  53 &  61 &  69 &  75 &  77 &  80 &  84 &  86 &  87 &  89 &  90  \\ 
22 & 180 &&   1 &   3 &   4 &   6 &  10 &  13 &  15 &  21 &  29 &  37 &  45 &  53 &  61 &  69 &  77 &  81 &  83 &  86 &  87 &  88 &  90 &  92  \\ 
22 & 180 &&   1 &   3 &   4 &   6 &  10 &  13 &  15 &  21 &  29 &  37 &  45 &  53 &  61 &  69 &  77 &  81 &  83 &  86 &  87 &  90 &  92 &  96  \\ 
23 & 196 &&   1 &   3 &   4 &   6 &  10 &  13 &  15 &  21 &  29 &  37 &  45 &  53 &  61 &  69 &  77 &  83 &  85 &  88 &  92 &  94 &  95 &  97 &  98  \\ 
23 & 196 &&   1 &   3 &   4 &   6 &  10 &  13 &  15 &  21 &  29 &  37 &  45 &  53 &  61 &  69 &  77 &  85 &  89 &  91 &  94 &  95 &  96 &  98 & 100  \\ 
23 & 196 &&   1 &   3 &   4 &   6 &  10 &  13 &  15 &  21 &  29 &  37 &  45 &  53 &  61 &  69 &  77 &  85 &  89 &  91 &  94 &  95 &  98 & 100 & 104  \\ 
24 & 212 &&   1 &   3 &   4 &   6 &  10 &  13 &  15 &  21 &  29 &  37 &  45 &  53 &  61 &  69 &  77 &  85 &  91 &  93 &  96 & 100 & 102 & 103 & 105 & 106  \\ 
24 & 212 &&   1 &   3 &   4 &   6 &  10 &  13 &  15 &  21 &  29 &  37 &  45 &  53 &  61 &  69 &  77 &  85 &  93 &  97 &  99 & 102 & 103 & 104 & 106 & 108  \\ 
24 & 212 &&   1 &   3 &   4 &   6 &  10 &  13 &  15 &  21 &  29 &  37 &  45 &  53 &  61 &  69 &  77 &  85 &  93 &  97 &  99 & 102 & 103 & 106 & 108 & 112  \\ 
  \end{tabular}
  \caption{Extremal stamp bases for $k=1,\ldots,24$ and their ranges.}
  \label{table:allbases}
\end{table}

\begin{table}[p]
  \centering
  \small
  \setlength{\tabcolsep}{1.35pt}
  \begin{tabular}{rr|rrrrrrrrrrrrrrrrrrrrrrrrrr}
    $k$ & $\nbar(k)$ && \multicolumn{10}{l}{stamp chain}\\
    \hline
 2 &   4 &&   1 &   2  \\ 
 3 &   6 &&   1 &   2 &   4  \\ 
 4 &  10 &&   1 &   2 &   4 &   5  \\ 
 5 &  14 &&   1 &   2 &   4 &   6 &   7  \\ 
 6 &  18 &&   1 &   2 &   4 &   6 &   8 &   9  \\ 
 7 &  22 &&   1 &   2 &   4 &   6 &   8 &  10 &  11  \\ 
 7 &  22 &&   1 &   2 &   3 &   6 &   9 &  10 &  11  \\ 
 7 &  22 &&   1 &   2 &   4 &   5 &   9 &  10 &  12  \\ 
 7 &  22 &&   1 &   2 &   4 &   6 &   7 &  14 &  15  \\ 
 7 &  22 &&   1 &   2 &   4 &   5 &  10 &  12 &  17  \\ 
 8 &  28 &&   1 &   2 &   4 &   5 &  10 &  11 &  13 &  14  \\ 
 8 &  28 &&   1 &   2 &   3 &   6 &   9 &  12 &  13 &  14  \\ 
 8 &  28 &&   1 &   2 &   4 &   6 &   8 &   9 &  18 &  19  \\ 
 9 &  34 &&   1 &   2 &   3 &   6 &   9 &  12 &  15 &  16 &  17  \\ 
 9 &  34 &&   1 &   2 &   4 &   6 &   8 &  10 &  11 &  22 &  23  \\ 
10 &  42 &&   1 &   2 &   4 &   5 &  10 &  12 &  17 &  18 &  20 &  21  \\ 
11 &  48 &&   1 &   2 &   3 &   4 &   8 &  12 &  16 &  20 &  22 &  23 &  25  \\ 
11 &  48 &&   1 &   2 &   3 &   6 &   8 &  12 &  16 &  20 &  22 &  23 &  25  \\ 
12 &  56 &&   1 &   2 &   4 &   6 &   7 &  14 &  15 &  22 &  23 &  25 &  27 &  28  \\ 
12 &  56 &&   1 &   2 &   4 &   5 &  10 &  12 &  17 &  19 &  24 &  25 &  27 &  28  \\ 
12 &  56 &&   1 &   2 &   3 &   4 &   8 &  12 &  16 &  20 &  24 &  26 &  27 &  29  \\ 
12 &  56 &&   1 &   2 &   3 &   6 &   8 &  12 &  16 &  20 &  24 &  26 &  27 &  29  \\ 
13 &  66 &&   1 &   2 &   4 &   5 &  10 &  12 &  17 &  22 &  24 &  29 &  30 &  32 &  33  \\ 
14 &  74 &&   1 &   2 &   4 &   5 &  10 &  12 &  17 &  21 &  26 &  28 &  33 &  34 &  36 &  37  \\ 
15 &  82 &&   1 &   2 &   4 &   5 &   6 &   9 &  15 &  21 &  27 &  33 &  36 &  37 &  38 &  40 &  41  \\ 
15 &  82 &&   1 &   2 &   4 &   5 &  10 &  11 &  16 &  17 &  22 &  23 &  25 &  26 &  52 &  54 &  56  \\ 
15 &  82 &&   1 &   2 &   3 &   6 &   9 &  12 &  15 &  18 &  21 &  24 &  25 &  26 &  52 &  54 &  56  \\ 
16 &  94 &&   1 &   2 &   4 &   5 &   6 &   9 &  15 &  21 &  27 &  33 &  39 &  42 &  43 &  44 &  46 &  47  \\ 
17 & 106 &&   1 &   2 &   4 &   5 &   6 &   9 &  15 &  21 &  27 &  33 &  39 &  45 &  48 &  49 &  50 &  52 &  53  \\ 
18 & 118 &&   1 &   2 &   4 &   5 &   6 &   9 &  15 &  21 &  27 &  33 &  39 &  45 &  51 &  54 &  55 &  56 &  58 &  59  \\ 
19 & 130 &&   1 &   2 &   4 &   5 &   6 &   9 &  15 &  21 &  27 &  33 &  39 &  45 &  51 &  57 &  60 &  61 &  62 &  64 &  65  \\ 
20 & 142 &&   1 &   2 &   4 &   5 &   6 &   9 &  15 &  21 &  27 &  33 &  39 &  45 &  51 &  57 &  63 &  66 &  67 &  68 &  70 &  71  \\ 
21 & 154 &&   1 &   2 &   4 &   5 &   6 &   9 &  15 &  21 &  27 &  33 &  39 &  45 &  51 &  57 &  63 &  69 &  72 &  73 &  74 &  76 &  77  \\ 
22 & 166 &&   1 &   2 &   4 &   5 &   7 &  11 &  14 &  16 &  22 &  30 &  38 &  46 &  54 &  62 &  68 &  70 &  73 &  77 &  79 &  80 &  82 &  83  \\ 
22 & 166 &&   1 &   2 &   4 &   5 &   6 &   9 &  15 &  21 &  27 &  33 &  39 &  45 &  51 &  57 &  63 &  69 &  75 &  78 &  79 &  80 &  82 &  83  \\ 
22 & 166 &&   1 &   2 &   4 &   5 &   7 &  11 &  14 &  16 &  22 &  30 &  38 &  46 &  54 &  62 &  70 &  74 &  76 &  79 &  80 &  81 &  83 &  85  \\ 
22 & 166 &&   1 &   2 &   4 &   5 &   7 &  11 &  14 &  16 &  22 &  30 &  38 &  46 &  54 &  62 &  70 &  74 &  76 &  79 &  80 &  83 &  85 &  89  \\ 
23 & 182 &&   1 &   2 &   4 &   5 &   7 &  11 &  14 &  16 &  22 &  30 &  38 &  46 &  54 &  62 &  70 &  76 &  78 &  81 &  85 &  87 &  88 &  90 &  91  \\ 
23 & 182 &&   1 &   2 &   4 &   5 &   7 &  11 &  14 &  16 &  22 &  30 &  38 &  46 &  54 &  62 &  70 &  78 &  82 &  84 &  87 &  88 &  89 &  91 &  93  \\ 
23 & 182 &&   1 &   2 &   4 &   5 &   7 &  11 &  14 &  16 &  22 &  30 &  38 &  46 &  54 &  62 &  70 &  78 &  82 &  84 &  87 &  88 &  91 &  93 &  97  \\ 
24 & 198 &&   1 &   2 &   4 &   5 &   7 &  11 &  14 &  16 &  22 &  30 &  38 &  46 &  54 &  62 &  70 &  78 &  84 &  86 &  89 &  93 &  95 &  96 &  98 &  99  \\ 
24 & 198 &&   1 &   2 &   4 &   5 &   7 &  11 &  14 &  16 &  22 &  30 &  38 &  46 &  54 &  62 &  70 &  78 &  86 &  90 &  92 &  95 &  96 &  97 &  99 & 101  \\ 
24 & 198 &&   1 &   2 &   4 &   5 &   7 &  11 &  14 &  16 &  22 &  30 &  38 &  46 &  54 &  62 &  70 &  78 &  86 &  90 &  92 &  95 &  96 &  99 & 101 & 105  \\ 
25 & 214 &&   1 &   2 &   4 &   5 &   7 &  11 &  14 &  16 &  22 &  30 &  38 &  46 &  54 &  62 &  70 &  78 &  86 &  92 &  94 &  97 & 101 & 103 & 104 & 106 & 107  \\ 
25 & 214 &&   1 &   2 &   4 &   5 &   7 &  11 &  14 &  16 &  22 &  30 &  38 &  46 &  54 &  62 &  70 &  78 &  86 &  94 &  98 & 100 & 103 & 104 & 105 & 107 & 109  \\ 
25 & 214 &&   1 &   2 &   4 &   5 &   7 &  11 &  14 &  16 &  22 &  30 &  38 &  46 &  54 &  62 &  70 &  78 &  86 &  94 &  98 & 100 & 103 & 104 & 107 & 109 & 113  \\ 
  \end{tabular}
  \caption{Extremal stamp chains for $k=2,\ldots,25$ and their ranges.}
  \label{table:allchains}
\end{table}

\end{document}